\title{Solving a Research Problem in Mathematical Statistics \\ with AI Assistance}
\author{
Edgar Dobriban\footnote{
Department of Statistics and Data Science, University of Pennsylvania. E-mail address: \texttt{dobriban@wharton.upenn.edu}.}
}
\date{\today}
\begin{document}
\maketitle
\abstract{
Over the last few months, 
AI models including large language models have improved greatly.
There are now several documented examples where they have helped professional mathematical scientists prove new results, sometimes even helping resolve known open problems.  
In this short note, we add another example to the list, by documenting how we were able to solve a previously unsolved research problem in robust mathematical statistics with crucial help from GPT-5. 
Our problem concerns robust density estimation, where the observations are perturbed by Wasserstein-bounded contaminations.
 In a previous preprint (Chao and Dobriban, 2023, arxiv:2308.01853v2), we have obtained upper and lower bounds on the minimax optimal estimation error; which were, however, not sharp. 
 
 Starting in October 2025, making significant use of GPT-5 Pro, we were able to derive the minimax optimal error rate (reported in version 3 of the above arxiv preprint).
 GPT-5 provided crucial help along the way, including  by suggesting calculations that we did not think of,  and techniques that were not familiar to us, such as the dynamic Benamou-Brenier formulation, for key steps in the analysis. Working with GPT-5 took a few weeks of effort, and we estimate that it could have taken several months to get the same results otherwise.  At the same time, there are still areas where working with GPT-5 was challenging: it sometimes provided incorrect references, and glossed over details that sometimes took days of work to fill in.
 We outline our workflow and steps taken to mitigate issues.
 Overall, our work can serve as additional documentation for a new age of human-AI collaborative work in mathematical science.
}


\section{Introduction}

Over the last few months, AI models for text generation have improved greatly, with new releases in major families, including GPT 5, Gemini, Claude, etc.
Among a variety of use cases, 
there are now several documented examples where they have helped professional mathematical scientists prove new results, sometimes even helping resolve known open problems, see e.g., \cite{jin2025counting,feldman2025g,jang2025point,salim2025accelerating,bubeck2025early,alexeev2025asymptotically,alexeev2025forbidden}, etc.

In this short note, we add another example to the list, by documenting how we were able to solve a
previously unsolved research
problem in robust mathematical statistics---where publicly documented examples of AI-assisted proofs are relatively more sparse---with crucial help from GPT-5. 
Our problem concerns robust density estimation when the observations are perturbed by Wasserstein-bounded contaminations.
 In a previous preprint \citep[][v2]{chao2023statistical},
 we have obtained upper and lower bounds on the minimax optimal estimation error; which were, however, not sharp. 
 
 Starting in October 2025, making significant use of GPT-5 Pro, we were able to derive the minimax optimal error rate, showing that neither of the previous upper/lower bounds were tight.
 GPT-5 provided crucial help along the way, including  by suggesting techniques that were not familiar to us, such as the dynamic Benamou-Brenier formulation \citep{brenier2004extended}, for key steps in the analysis.
 
 Working with GPT-5 took a few weeks of effort, and we estimate that it could have taken several months to get the same results otherwise.  At the same time, there are still areas where working with GPT-5 was challenging: it provided incorrect references, and glossed over details that sometimes took days of work to fill in.
 We outline our workflow and steps taken to mitigate issues.
 Overall, our work can serve as additional documentation for a new age of human-AI collaborative work in mathematical science.

\section{The Problem: Robust Density Estimation under Wasserstein Contaminations}

In this section, we aim to describe the problem that we have considered at a level that should be accessible to those familiar with undergraduate multivariable calculus, probability theory, and statistics. 

Our problem\footnote{To the best of our knowledge, our question is indeed an unsolved problem. However, it is also not clear if anybody has formally proposed this precise problem before (beyond our own prior work on the problem). Thus, perhaps one might not call it an open problem. However, this reflects a dominant mode of thinking and research in statistics, where open problems have rarely been the main drivers of progress; see, for instance \cite{wasserman2012biggest, jordan2011openbayesian} for some informal discussion.} is the following: 

\begin{center}
\fbox{%
  \parbox{0.8\textwidth}{%
We observe an i.i.d.\ sample $X_1',\ldots,X_n'$ in $\R^p$ from a 
density $\tilde f$. However, this sample has been contaminated. 
This density $\tilde f$ of the data has been perturbed by a bounded contamination compared 
to the true density $f$. How well can we recover the true density 
based on the contaminated data?%
  }%
}
\end{center}

{\bf Wasserstein contaminations.}
The level of perturbation is assumed to be captured by a Wasserstein norm. 
Specifically, consider some $\ell_q$ norm, $q \in [1,\infty]$.
A typical case is the Euclidean norm, where $q=2$.
 Let $X_1,\ldots,X_n$ in $\R^p$ be an i.i.d.~sample from the 
true density $f$.
 Let $\ep\ge 0$ represent the strength of the perturbation.
 Then we assume that, for some $r\ge 1$, 
 there is a joint realization of the original and contaminated data, such that 
$(\E\|X_i'-X_i\|_q^r)^{1/r} \le \ep$, for all $i=1,\ldots, n$.
 Denoting by $W_{q,r}$ the Wasserstein-$r$ metric induced by the $\ell_q$ norm on $\R^p$, this corresponds to\footnote{The rigorous definition of the metric is on probability distributions instead of random variables. We use an informal definition here for simplicity.} $W_{q,r}(X_i',X_i) \le \ep$, see e.g., \cite{villani2003topics,villani2008optimal,santambrogio2015optimal,figalli2021invitation}, etc.

Now consider an arbitrary given $x_0 \in \R^p$. How well can we estimate the true density $f(x_0)$ based on the contaminated sample?
 And what is the best density estimator? 
 To quantify the level of error, we can choose a loss function,\footnote{Our result actually holds for much more general loss functions of the form $\varphi(|\hat f(x_0)-f(x_0)|)$, where $\varphi$ is a real-valued, non-negative, convex function that maps zero to zero and does not grow faster than an exponential, satisfying a doubling property, see \cite{chao2023statistical}. However, we will focus on the squared loss for clarity.} such as 
the squared error
 $L(\hat f, f) = (\hat f(x_0)-f(x_0))^2$.
\begin{figure}[t]
\centering
\begin{tikzpicture}[xscale=0.9,yscale=1.3]

  \draw[->] (-0.5,0) -- (6.8,0) node[right] {$x$};
  \draw[->] (0,-0.2) -- (0,2.4) node[above] {density};

  \draw[thick,blue,domain=0.2:6.5,samples=300]
    plot (\x,{
      0.6*exp(-0.5*(\x-1.5)^2)
      +0.9*exp(-0.5*(\x-3.5)^2)
      +0.5*exp(-0.5*(\x-5.0)^2)
    });

  \draw[thick,red,dashed,domain=0.2:6.5,samples=300]
    plot (\x,{
      0.5*exp(-0.5*(\x-1.4)^2)
      +1.0*exp(-0.5*(\x-3.7)^2)
      +0.4*exp(-0.5*(\x-5.1)^2)
    });
  \node[red] at (4.9,1.6) {$\tilde f(x)$};

  \def\xzero{2.0}
  \def\yzero{0.83}  

  \draw[densely dotted] (\xzero,0) -- (\xzero,\yzero);

  \fill (\xzero,0) circle (1.4pt) node[below] {$x_0$};

  \fill (\xzero,\yzero) circle (1.4pt)
    node[above left] {$f(x_0)$};

  \foreach \x in {0.8,1.3,1.9,2.4,2.7,3.0,3.2,3.6,4.0,4.5,5.0}
    \fill (\x,0) circle (1.2pt);

  \node[below] at (3.3,-0.35)
    {$X_1',\dots,X_n' \sim \tilde f$};

\end{tikzpicture}
\caption{True density $f$, contaminated density $\tilde f$, observed contaminated sample $X_1',\dots,X_n'$, and the target value $f(x_0)$ to be estimated.}
\end{figure}
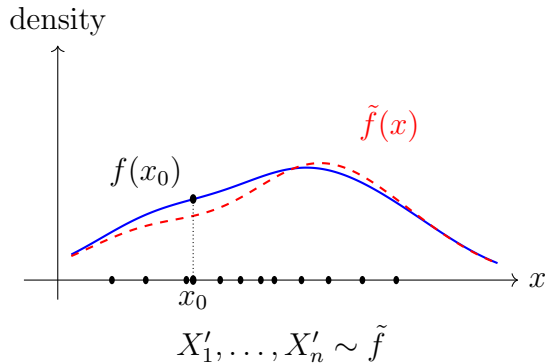

 This contamination model corresponds to small but systematic 
perturbations 
 of the entire original sample, and can be contrasted to the Huber $\epsilon$-contamination model, which allows large, potentially unbounded perturbations of a small fraction of the data, see e.g., \cite{huberLocation,huberRatio,huber2004robust,hampel2005robust}, etc. 
The study of the Wasserstein contamination model has been initiated in \cite{zhu2020robuststat,Liu2021RobustWE}; for problems other than density estimation.

\subsection{Background: Density Estimation and Minimax Rates of Convergence}

Without contaminations, 
the problem of density estimation has been widely studied in nonparametric statistics, see e.g., \cite{stone1984asymptotically,tsybakov2009nonparametric}.
 It is well known that some additional smoothness or structural 
 condition must hold in order for density estimation to be possible. 
 If the density is completely unconstrained, then it can wiggle or oscillate so quickly that there is no hope of estimating its value at any given point. 

{\bf Hölder smoothness.}
One of the most widely studied smoothness conditions is Hölder smoothness.
This requires that a function has sufficiently many derivatives,
which themselves have to be appropriately smooth. 
Consider some smoothness parameter $s>0$, and let $l$ be the 
largest non-negative integer that is strictly smaller than $s$.
A one-dimensional density is $s$-H\"older smooth if it has 
an $l$th derivative $f^{(l)}$, 
and there is $L>0$ (sometimes referred to as the Lipschitz constant)  such that
$$
|f^{(l)}(x)-f^{(l)}(y)|\le L|x-y|^{s-l} \ \text{for all }x,y\in\R.$$
For instance, if $s<1$, then $l=0$ and it is required that 
$$
|f(x)-f(y)|\le L|x-y|^{s} \ \text{for all }x,y\in\R.$$
This is interpreted as the  ``singularities" of $f$ being behaving at most as $x\mapsto x^s$, e.g., for $s = 1/2$ at most square-root singularities.

In the $p$-dimensional case, it is required that all partial derivatives up to order $l$ exist and satisfy the analogous condition with $\|x-y\|_2$ instead of $|x-y|$. 

{\bf Minimax risk.}
A fundamental way to evaluate performance in mathematical statistics and statistical decision theory is the minimax risk \citep{wald1949statistical}. 
This is the best performance that any method can achieve in the worst case over a specified problem class. 
In our setting, it is the best possible squared estimation error 
$$\E_{X_1', \ldots, X_n'\sim \tilde f} (\hat f(x_0; X_1', \ldots, X_n')-f(x_0))^2$$  
that an estimator $\hat f(x_0; X_1', \ldots, X_n')$
constructed 
using the contaminated sample $X_1', \ldots, X_n'$ can achieve, 
in the worst case over all $s$-H\"older
smooth densities with a given Lipschitz constant $L$
and all i.i.d $W_{q,r}$
perturbations $\tilde f$ of size at most $\varepsilon$ of the density $f$. 
To be fully precise, the perturbations are also required to be smooth for similar reasons as in the unperturbed case, however, they can have a potentially different Hölder smoothness constant and Lipschitz constant.\footnote{However, it turns out that the rate of convergence as a function of $n$ and $\epsilon$ does not depend on these, so they will not be mentioned further.} 

{\bf Classical rate of convergence.}
One of the cornerstones of nonparametric statistics 
is that without perturbations, the minimax optimal rate of convergence is \citep[see e.g.,][etc]{stone1984asymptotically,tsybakov2009nonparametric}
\begin{equation}\label{st}
n^{-\frac{2s}{2s+p}}.
\end{equation}

Moreover, this rate is achieved by kernel density estimation with 
a suitably smooth and bounded kernel and bandwidth \(
h^\star \asymp
  n^{-\frac{1}{2s+p}}.
  \)
\subsection{The New Result: Optimal Density Estimation under Wasserstein Contamination}

In this context, our new result on optimal density estimation under Wasserstein contaminations shows that the 
rate of the 
minimax risk
is determined by the maximum of two terms:
the standard rate and
a term dependent on $\varepsilon$.

\begin{theorem}[Density estimation under Wasserstein contamination, \cite{chao2023statistical}]\label{thm: pointwise density bounds}
Let $p\in\mathbb{N}$, $s>0$, $L>0$, 
$q\in[1,\infty]$,
$r\in[1,\infty)$. 
Then for all $n\ge1$ and $\ep\in(0,1]$,
the minimax risk for $s$-H\"older smooth density estimation in $p$ dimensions 
from
 $\ep$-bounded 
$W_{q,r}$ i.i.d.~Wasserstein contaminated data
has the rate 
\begin{equation*}
\max\left\{n^{-\frac{2s}{2s+p}},\ \ep^{\frac{2s}{ s+1+p/r }}\right\}.
\end{equation*}
\end{theorem}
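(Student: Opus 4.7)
The plan is to establish matching upper and lower bounds on the squared minimax risk, each of which exhibits a maximum of the classical rate and a new contamination-dependent rate.

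\textbf{Upper bound via kernel density estimation.} I would use the kernel density estimator
\[
\hat f(x_0) \;=\; \frac{1}{nh^p}\sum_{i=1}^n K\!\left(\frac{X_i' - x_0}{h}\right),
\]
with $K$ smooth and of sufficiently high order, and with bandwidth $h$ to be chosen. Decompose the mean squared error into variance (classically $O((nh^p)^{-1})$) and squared bias. Writing $K_h(y)=h^{-p}K(y/h)$, the bias splits as
\[
\E\hat f(x_0) - f(x_0) \;=\; \Bigl[\int K_h(x - x_0)\,f(x)\,dx - f(x_0)\Bigr] \;+\; \int K_h(x-x_0)\,(\tilde f - f)(x)\,dx,
\]
where the first term is the standard $O(h^s)$ H\"older smoothing bias. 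To control the new contamination bias, I would invoke the dynamic Benamou-Brenier representation of $W_{q,r}$: writing $\tilde f - f = -\int_0^1 \mathrm{div}(\rho_t v_t)\,dt$ along a minimizing density path $(\rho_t,v_t)$ with $\rho_0=\tilde f$, $\rho_1=f$, and $\int_0^1\!\int |v_t|_q^r\,d\rho_t\,dt = W_{q,r}(\tilde f,f)^r$, then integrating by parts and applying H\"older's inequality pointwise (in $\ell_q$/$\ell_{q^*}$) and in time (conjugate exponents $r$ and $r'$) yields
\[
\left|\int K_h(x-x_0)(\tilde f - f)\,dx\right| \;\le\; W_{q,r}(\tilde f,f)\cdot\left(\int_0^1\!\int |\nabla K_h|_{q^*}^{r'}\,d\rho_t\,dt\right)^{1/r'}.
\]
Once the interpolants $\rho_t$ are controlled uniformly in $L^\infty$, the right side is $\lesssim \ep\,\|\nabla K_h\|_{L^{r'}} \asymp \ep\,h^{-1-p/r}$ by a direct scaling computation. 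Optimizing $h$ against the three contributions $(nh^p)^{-1/2}$, $h^s$, and $\ep h^{-1-p/r}$ then produces the claimed maximum rate.

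\textbf{Lower bound via Le Cam's two-point method.} For the $n$-dependent term, I would reuse the classical single-bump construction at scale $h \asymp n^{-1/(2s+p)}$ without contamination. For the $\ep$-dependent term, fix a mean-zero, compactly supported, smooth bump $\psi$ with $\psi(0)=1$, and a smooth baseline density $f_0$ bounded away from $0$ near $x_0$, and set
\[
f_1(x) \;=\; f_0(x) + \delta\,\psi\!\left(\frac{x-x_0}{h}\right),\qquad \delta \asymp h^s,
\]
so that $f_1$ lies in the H\"older ball and $|f_0(x_0)-f_1(x_0)|\asymp h^s$. A Benamou-Brenier upper bound using a candidate velocity field of magnitude $\delta h$ supported in $B(x_0,h)$ yields $W_{q,r}(f_0,f_1) \lesssim \delta\,h^{1+p/r}$, uniformly in $r\in[1,\infty]$ (with $p/r=0$ at $r=\infty$). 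Choosing $h\asymp \ep^{1/(s+1+p/r)}$ makes this $\asymp\ep$, and taking the common observation density $\tilde f$ to be the Benamou-Brenier geodesic midpoint of $f_0$ and $f_1$ gives $W_{q,r}(\tilde f, f_i)\le \ep$ for $i=0,1$. Since the two observation laws then coincide, Le Cam's inequality forces the squared minimax risk to be $\gtrsim \delta^2 \asymp \ep^{2s/(s+1+p/r)}$.

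\textbf{Anticipated main obstacle.} The hardest step I anticipate is the uniform $L^\infty$ control of the interpolating densities $\rho_t$ in the upper bound: a general $W_{q,r}$ geodesic between two bounded densities need not remain uniformly bounded, so converting the $d\rho_t$-weighted integral of $|\nabla K_h|^{r'}$ into a flat Lebesgue norm likely requires either a displacement-convexity argument exploiting the smoothness of $\tilde f$ and $f$, or a pre-mollification of both endpoints at a scale related to $h$ with careful tracking of the induced approximation error. It is plausibly here that the Benamou-Brenier machinery singled out in the introduction becomes indispensable. A secondary subtlety is verifying the $\delta h^{1+p/r}$ scaling of $W_{q,r}(f_0,f_1)$ uniformly in $r$, including the endpoint $r=\infty$, where the optimal transport map is a smooth displacement of amplitude $\delta h$ rather than the bump width $h$.
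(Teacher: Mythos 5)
Your architecture matches the paper's on both sides: the upper bound is a kernel estimator whose contamination bias is bounded by pairing $\nabla K_h$ against the displacement/dynamic interpolation between $f$ and $\tilde f$ and applying H\"older in $\ell_q/\ell_{q^\star}$ and in the $r/r^\star$ exponents (giving the bias $\varepsilon\,h^{-1-p/r}$), and the lower bound is a two-point bump construction whose Wasserstein cost is bounded by $\delta\,h^{1+p/r}$ via a divergence-equation (Benamou--Brenier) velocity field of size $\delta h$ --- this is exactly the paper's key lemma, and your indistinguishability step (common observation law $\tilde f$, or more simply $\tilde f=f_0$, which also sidesteps any smoothness question about a geodesic midpoint) is sound.

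However, there is a genuine gap, and it sits precisely where you flag your ``anticipated main obstacle'': the uniform $L^\infty$ control of the interpolating densities $\rho_t$, which your upper bound needs to convert $\int_0^1\!\int \|\nabla K_h\|_{q^\star}^{r^\star}\,d\rho_t\,dt$ into a Lebesgue-norm scaling in $h$. Neither of the remedies you suggest is what closes it: pre-mollifying the endpoints does not bound the interpolant (the endpoints are already bounded, being H\"older densities; the issue is the path, not the endpoints), and ``displacement convexity exploiting smoothness'' is not quite the right tool either. What works, and what the paper does, is structural: for $q\in(1,\infty)$, $r>1$ the cost $\|x-y\|_q^r$ admits a deterministic optimal transport map $T$ (Gangbo--McCann), the minimizing path is the linear interpolation $S_t=(1-t)\,\mathrm{id}+tT$, and one proves a lower bound on the Jacobian determinant of $S_t$ along the path in the spirit of the Borell--Brascamp--Lieb interpolation inequality (and its non-Euclidean adaptations), yielding $\|\rho_t\|_\infty\lesssim\max\{\|f\|_\infty,\|\tilde f\|_\infty\}$. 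Available off-the-shelf statements cover only the Euclidean norm or the squared cost, so this step has to be proved for general $\ell_q^r$; it was the hardest part of the paper's argument. Two smaller loose ends: your upper bound implicitly assumes a minimizing dynamic path exists for the general $\ell_q^r$ cost (this again rides on the optimal map / the extended Benamou--Brenier theory), and $r=1$ (no strict convexity, no deterministic map) needs a separate treatment, e.g., by Kantorovich duality, while $r=\infty$ is not part of the claimed theorem and need not be handled.
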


{\bf Optimal estimator.}
Moreover, the upper bound is achieved by kernel density estimation with suitably smooth and bounded kernels and bandwidth \(
h^\star \asymp
 \max\left\{ n^{-\frac{1}{2s+p}},
  \ep^{\frac{1}{ s+1+p/r }}\right\}.
  \)
This shows that, compared to the unperturbed case,
the minimax risk and the required level of smoothing in the estimator does not change if $\ep$ is sufficiently small that
\(
\ep \lesssim n^{-\frac{s+1+p/r}{2s+p}}.
\)
However, the risk is dominated by the contaminations above that level, and the required level of smoothing is also determined by the magnitude of contaminations. 

{\bf Contrast with robustness in the classical Huber $\ep$-contamination model.}
To put things in context, one key reason why this result is interesting is that it shows that for the Wasserstein-type contaminations, classical methods based on local smoothing can be optimal in a specific problem. 
This is in stark contrast with the more classically studied Huber $\varepsilon$ contamination model, where 
standard smoothing or averaging-based estimators 
can be very sensitive, and other estimators, such as median-type methods, are required. 
Thus, complementing also the work of \cite{zhu2020robuststat,Liu2021RobustWE}, our results provide a different perspective on robust statistical estimation.

\section{The Proof: A Human-AI Interaction Workflow}

In this section, we explain our workflow for obtaining the proof.
 The full proof itself can be found in our preprint \citep[arxiv v3]{chao2023statistical}, and we will not repeat it here. 
 Instead, we will explain the context, our workflow emphasizes the complementary contributions of human work and AI, aiming to distill generalizable principles that could be useful more broadly. 

\subsection{Context and Background for the Problem}

{\bf Robust statistics.}
Our problem can be viewed as a natural question in robust statistics. 
Contaminations and robust statistics are well-studied, with a large literature focusing on the Huber $\ep$-contamination model, see e.g., \cite{huberLocation,huberRatio,huber2004robust,hampel2005robust}, etc. 
Given a distribution $P$, the data are sampled from a distribution $Q= (1-\ep)P+\ep P'$, where $P'$ is unconstrained in the standard formulation.

{\bf Optimal transport.}
Optimal transport 
has registered great progress over the last twenty years
\citep{villani2003topics,villani2008optimal,ambrosio2008gradient,santambrogio2015optimal,figalli2021invitation,peyre2019computational}.
Similarly, Wasserstein distributionally robust estimation and contamination have become widely studied, see e.g., \cite{esfahani2015dro,lee2017minimax,blanchet2017distributionally,singh2018minimax,manole2019minimax,weed2019estimation,blanchet2019robust,chen2020distributionally,staib2020learning,shafieezadeh2020wasserstein,hutter2021minimax,duchi2021statistics}.
These works typically focus on solving numerical minimax optimization problems algorithmically, where estimators minimize the least favorable risk over a class of contaminations.

{\bf Prior work on Wasserstein contaminations.}
In contrast, we aim to investigate robust estimation problems.
Prior work by 
\cite{zhu2020robuststat,Liu2021RobustWE} 
has initiated this line of inquiry, obtaining 
a variety of intriguing results, for problems ranging from mean estimation and linear regression to covariance matrix estimation.
Their works show that various estimators, such as Wasserstein GANs, can be nearly minimax optimal.

Inspired by these works,
starting around February 2021, we started investigating the problem of robustness to Wasserstein distribution shifts, with initial results reported in August 2023 \citep[arxiv v1]{chao2023statistical}.
While some of our results concern mean estimation and linear regression as studied in \cite{zhu2020robuststat,Liu2021RobustWE}, here we would like to focus on the problem of nonparametric estimation under Wasserstein distribution shift, which, to the best of our knowledge has not been studied before our work.

\subsection{Our Initial, Human-only Results}

Let $\M(\ep, s, p, q, r)$ be the minimax risk for $p$-dimensional $s$-H\"older smooth density estimation under $\ell_q^r$ Wasserstein perturbations of size $\ep$, as defined above. 
In the initial version of our manuscript \citep[arxiv v1]{chao2023statistical}, we have 
considered the special case of $p=1$-dimensional estimation, under $\ell_2^2$ perturbations, so that $q=r=2$.
We have
obtained the following result:
    \begin{align*}
        n^{-2s/(2s+1)} \vee \ep^{4s/(2s+1)}  \lesssim  \M(\ep, s, 1, 2, 2)  \lesssim n^{-2s/(2s+1)} \vee \ep^{2s/(s+2)} .
    \end{align*}
In this result, the term $n^{-2s/(2s+1)} $ represents the classical minimax optimal rate from \eqref{st}; and it is quite clear that this represents the correct dependence on the sample size $n$. 
However, the dependence on $\ep$ is much less clear.
Our results above do not match, and the lower bound can be much smaller in $\ep$ than the upper bound.

There are clear proof strategies for both the upper and the lower bound, which are well-known in the literature.
\begin{enumerate}
    \item For the {\bf upper bound}, a classical approach \cite[see e.g.,][etc]{stone1984asymptotically,tsybakov2009nonparametric} characterizes the bias and the variance of a kernel density estimator. 
    \item For the {\bf lower bound}, there are a variety of classical strategies, such as the Le Cam, Fano, and Assouad bounds, as well as the modulus of continuity
    \cite[see e.g.,][etc]{stone1984asymptotically,tsybakov2009nonparametric,donoho1991geometrizing}.
\end{enumerate}
In our initial work \citep[arxiv v1]{chao2023statistical}, we have started by attempting to adapt these strategies. Specifically:
\begin{enumerate}
    \item For the {\bf upper bound}, the same approach applies, but there is also a new contamination bias term.
    Let $K:\R^p \to \R$ be a kernel, and let $h>0$ be a bandwidth.
    Then this contamination bias term equals
    \begin{align}\label{cb}
        \frac{1}{h^p}\EE[X_1'\sim \tilde f]{K \left( \frac{X_1'-x_0} {h}\right)}-\frac{1}{h^p}\EE[X_1\sim f]{K \left( \frac{X_1-x_0} {h}\right)}. 
\end{align}
In our work, we have used a Lipschitz property for $K$ and Jensen's inequality to bound this term, obtaining a bias bound of the order of $\ep/h^2$.

\item  In our {\bf lower bound}, we have adapted the modulus of continuity approach, by starting with a normal density $f = \varphi_\sigma$ with sufficiently large variance $\sigma^2>0$, and perturbing it with a mean zero function to obtain a contaminated density $\tilde f$. 
Then, we upper bounded $W_{2,2}(f,\tilde f) \le \ep$ (via the Talagrand transportation cost inequality, see \cite{talagrand1996transportation,otto2000generalization}) and lower bounded $|f(x_0)-\tilde f(x_0)|$, which---by the modulus of continuity approach---lead to a lower bound on the estimation error.
    
\end{enumerate}

However, our two bounds did not match. 
Moreover, despite familiarity with basic techniques and results in optimal transport, we were unable to come up with better results. 
Further, we were even unsure whether the lower or the upper bounds need improvement, and simulation results were not too helpful, as the problem space is quite large (many possible base densities, perturbations, and estimators).
Therefore, we were quite stuck.

\subsection{Optimal Results via Human-AI Collaboration}

For the revision of the paper, 
starting in October 2025, the present author has decided to try using LLMs for assistance. 
LLM capabilities had been increasing steadily, and there were already a number of examples of LLMs being helpful with quite advanced mathematical results, as described above. 
This provided ample motivation.

{\bf Workflow.}
The workflow was simple. 
We copy-pasted the relevant parts of our LaTeX manuscript (problem statement and proof) into\footnote{It is possible that other models could work well; we have simply not tried these models in the current work.} GPT-5 via the web interface, and appended a simple prompt such as  ``I'm working on this theorem. Can you come up with a better upper bound? Please prepare ready to copy-paste fully rigorous LaTeX".
Then, we would send the prompt to GPT-5 Pro, copy-paste the output into a LaTeX editor, and carefully verify the results through our own human expertise.
If we identified any mistakes, we simply explained them to the model, and continued the conversation.

The extended test-time compute of GPT-5 Pro was essential, 
and reasoning took anywhere from a few minutes to 25 minutes per turn. Answers without test-time reasoning were significantly less useful.

{\bf The contributions of the AI.}
GPT helped by providing ideas and suggestions on improving both the upper and the lower bounds. 
In both cases, the argument started with the human-provided reasoning, and improved/sharpened the results.
We explain this below.

\begin{enumerate}

\begin{tcolorbox}[title=GPT Suggests Improved Bias Bound,fonttitle=\bfseries,colback=white,colframe=black]
User: [Prompt includes LaTeX draft of paper and a query: "Can you double-check this proof from beginning to end, and make sure it is fully correct? Please identify any place where there are incorrect arguments."]

GPT: [...]

The correct formulation is as follows. Define the interpolation
\begin{equation*}
T_t(x) = (1-t)x + t\,T(x), \qquad X_t = T_t(X),
\end{equation*}
where $X\sim P$ and $T$ is the optimal transport map from $P$ to $Q$. \red{Then
\begin{equation*}
\frac{d}{dt}\,\E\big[\varphi(X_t)\big]
=
\E\big[\varphi'(X_t)\cdot\big(T(X)-X\big)\big],
\end{equation*}
where the expectation is with respect to the original law of $X\sim P$}.
[...] 

\medskip

\noindent\textbf{Consequence.}
Once the derivative is written correctly, the displayed Cauchy–Schwarz step goes through. The correct bound is
\begin{equation*}
\big|\E_Q[\varphi] - \E_P[\varphi]\big|
\le
\Bigg(\int_0^1 \E_P\big[\varphi'(X_t)^2\big]\,dt\Bigg)^{1/2} W_2(P,Q),
\end{equation*}
which coincides with your statement after noting that
$
\E_P\big[g(X_t)\big] = \E_{P_t}[g]
$ for any test function $g$, where $P_t$ is the law of $X_t$.

\end{tcolorbox}

    \item  For the {\bf upper bound}, 
    {\bf the key suggestion was a better control of the contamination bias} term from \eqref{cb}.
    The first observation was to leverage that for our problem, and for\footnote{The remaining values of $q$ can be handled by simple limiting arguments, and $r=1$ can be handled by Kantorovich duality.} $q\in(1,\infty)$, $r>1$, there is always a deterministic optimal transport map $T$, as opposed to a generic probabilistic optimal coupling,  see e.g.,
\cite{GangboMcCann1996},
or
Theorem 10.28 or 10.38, and surrounding discussion in \citet{villani2008optimal}. We were aware of this result, but did not consider using it here. 

Write 
$\varphi(x)=K_h(x_0-x) = K[(x_0-x)/h]/h^p$, 
$\Delta(y)=T(y)-y$.
For $t\in[0,1]$, let $S_t(y)=(1-t)y+tT(y)$ being the linear interpolation of the identity map with the optimal transport map.
{\bf 
GPT suggested a sharper analysis of the contamination bias  $(K_h\!*\tilde f)(x_0)-(K_h\!*f)(x_0)$}
 by writing it leveraging the fundamental theorem of calculus through $  \varphi(T(y)) - \varphi(y)
    = \int_0^1 \nabla \varphi(S_t(y))^\top \Delta(y)\,dt$ as
\begin{equation*}
\int\big(\varphi(T(y))-\varphi(y)\big)f(y) dy
=\int_0^1\!\!\int \nabla\varphi\big(S_t(y)\big)^\top
 \Delta(y)  f(y) dy dt.
\end{equation*}
{\bf Consequence.}
Then, one can use H\"older's inequality to bound the inner product.
Let $\nu_t:=(S_t)_\#(f dy)$ 
be the pushforward measure 
and let its 
density be $\rho_t$.
Let $r^\star > 1$ be the conjugate of $r$, defined such that $1/r^\star+1/r = 1$, and let $q^\star$ be defined analogously.
For each $t\in(0,1)$, we then have
\begin{equation*}
\int \big|\nabla\varphi(S_t(y))^\top \Delta(y)\big|  f(y) dy
\ \le\
\Big(\int \|\nabla\varphi(z)\|_{q^\star}^{r^\star} \rho_t(z) dz\Big)^{1/r^\star}\Big(\int \|\Delta(y)\|_q^r f(y) dy\Big)^{1/r}.
\end{equation*}
If the density $\rho_t$ is bounded, then the first term can be bounded by a standard integral in terms of $\varphi$ against the Lebesgue measure, which depends only on the kernel $K$ (which is fixed), and the bandwidth $h$ (which can be controlled).
Moreover, the second term is precisely $W_{q,r}(\tilde f, f) \le \ep$  and so can be controlled.

This argument  leads to the correct upper bound.

{\bf Transcript and extract.}
We provide a link to the first transcript where GPT made such a suggestion in a correct form.\footnote{\url{https://chatgpt.com/share/6922876a-7988-8007-9c62-5f71772af6aa}} 
An extract from our interaction is also provided.\footnote{ 
Here, GPT actually suggests a fix to a previous, incorrect dynamic formulation.
Note that this is for a special case for our problem (one dimension, $q=r=2$), because it occurred prior to generalizing to a higher dimensional case, etc.}

\begin{tcolorbox}[title=GPT-5 Suggests Dynamic Benamou-Brenier for Improved Lower Bound (Extract) ,fonttitle=\bfseries,colback=white,colframe=black]

User: [Prompt includes LaTeX draft of paper and a query: "Can you go through this proof and see if you can improve either the upper bound and the lower bound until they match?"]

GPT: [...]

The root cause is that Lemma [..] scales like $W_2^2 \lesssim D^2\int g_{+}$, i.e. it is linear in the bump amplitude $\tau$ while the correct small–amplitude scaling for $W_2$ in one dimension is quadratic in $\tau$. 

[...]

\medskip



We state and prove the replacement for Lemma [..].

\begin{lemma}[Refined local $W_2$ bound via the dynamic formulation]
Let $q$ be a probability density on $\mathbb{R}$ and let $I\subset\mathbb{R}$ be a finite interval. Let $g\in L^1(\mathbb{R})$ satisfy $\operatorname{supp}(g)\subset I$ and $\int g=0$. Write $G(x)=\int_{-\infty}^x g(t)\,dt$. Suppose there exists $c_I>0$ such that $q(x)\ge c_I$ on $I$ and $|g|_\infty\le c_I/2$. Then $q\pm g$ are densities and
\[
W_2^2(q+g,q)\le 2c_I\int_{\mathbb{R}} G(x)^2\,dx,
\qquad
W_2^2(q-g,q)\le 2c_I\int_{\mathbb{R}} G(x)^2\,dx.
\]
\end{lemma}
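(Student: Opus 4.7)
The plan is to apply the dynamic Benamou-Brenier formulation of the Wasserstein-2 distance on the real line to a linear interpolation between $q$ and $q \pm g$; this gives exactly the quadratic-in-amplitude scaling that the stated critique identifies as missing from the previous lemma.

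First, I would verify basic admissibility. Since $q \ge c_I$ on $\operatorname{supp}(g) \subset I$ and $\|g\|_\infty \le c_I/2$, the functions $q \pm g$ are at least $c_I/2$ on $I$ and equal to $q$ off $I$; combined with $\int g = 0$, they are probability densities. For the same reason, the linear interpolation $\rho_t := q + t g$ (respectively $q - t g$) is a probability density for every $t \in [0,1]$, uniformly bounded below by $c_I/2$ on $I$.

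Next, I would invoke the Benamou-Brenier identity
\[
W_2^2(\rho_0, \rho_1) \;=\; \inf_{(\rho, v)} \int_0^1 \int_{\mathbb{R}} v_t(x)^2\, \rho_t(x)\, dx\, dt,
\]
where the infimum ranges over curves of densities $\rho_t$ and velocity fields $v_t$ satisfying the continuity equation $\partial_t \rho_t + \partial_x(\rho_t v_t) = 0$ with the prescribed endpoints \cite{ambrosio2008gradient,santambrogio2015optimal}. Along the chosen interpolation, $\partial_t \rho_t = g = \partial_x G$, so the continuity equation is solved by the momentum field $\rho_t v_t \equiv -G$, i.e.\ $v_t(x) = -G(x)/\rho_t(x)$. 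The key observation is that $G$ is supported inside $I$: it vanishes to the left of $I$ by definition, and to the right of $I$ because $\int g = 0$; hence $v_t$ is well-defined wherever it contributes to the action.

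Plugging in and using $v_t^2 \rho_t = G^2/\rho_t$ together with the uniform lower bound $\rho_t \ge c_I/2$ on $\operatorname{supp}(G)$ reduces the computation to integrating $G^2$ against Lebesgue measure, delivering the claimed bound after integrating in $t$; the symmetric interpolation $\rho_t := q - t g$ handles the $q-g$ case. The main obstacle I anticipate is the rigorous verification that the explicit pair $(\rho_t, v_t)$ is an admissible Benamou-Brenier competitor in the distributional sense with finite action and the prescribed endpoints; with $g \in L^1$ compactly supported in $I$ and $\rho_t$ uniformly bounded below on $\operatorname{supp}(G)$, this is standard but requires a mollification/approximation argument that should not be skipped in a careful write-up. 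A cleaner alternative in one dimension is to write down the monotone rearrangement $T = F_1^{-1} \circ F_0$ between the CDFs of $q$ and $q+g$ explicitly and Taylor-expand $T(x) - x$ to first order in $g$, which recovers the same quadratic-in-$g$ bound directly and bypasses the distributional subtleties entirely; I would keep this as a safety net if the Benamou-Brenier route runs into regularity issues.
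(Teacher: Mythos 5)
Your route is exactly the one the paper takes: linearly interpolate $\rho_t = q + tg$, choose the constant-in-$t$ momentum $\rho_t v_t = -G$ so the continuity equation holds, observe that $G$ is supported in $I$ because $\int g = 0$, and plug into the Benamou--Brenier action. You also correctly compute $|v_t|^2\rho_t = G^2/\rho_t$, a step the paper's verbatim GPT extract actually garbles (it simplifies $|v_t|^2\rho_t$ to $G^2$, which drops the density altogether). Your extra care about admissibility of the competitor and the 1D monotone-rearrangement safety net are both sound.

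The one thing you should have flagged rather than glossed over: your estimate $G^2/\rho_t \le 2G^2/c_I$ on $\operatorname{supp}(G)$ delivers $W_2^2(q\pm g, q) \le (2/c_I)\int_{\mathbb{R}} G^2\,dx$, not the stated $2c_I\int_{\mathbb{R}} G^2\,dx$. These agree only when $c_I = 1$; in general the stated constant is dimensionally inconsistent, since $c_I$ has units of inverse length, $\int G^2\,dx$ has units of length, and $W_2^2$ has units of length squared. So the lemma as quoted contains a typo, and you should not assert that the argument "delivers the claimed bound" when it in fact delivers a different — and correct — one. The paper's own refined $p$-dimensional lemma confirms this: its factor is $2^{r-1}/c_I^{r-1}$, which at $r=2$ is precisely $2/c_I$. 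Catching and reporting that discrepancy is part of the job when verifying a stated bound; here it would also have exposed a genuine error in the AI-produced statement.
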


\begin{proof}
Consider the path of densities $\rho_t=q+t g$ for $t\in[0,1]$. On $I$ we have $\rho_t\ge q-|g|_\infty\ge c_I/2$, while off $I$ one has $\rho_t=q>0$. Define $v_t(x)=-G(x)/\rho_t(x)$. Then $\rho_t v_t=-G$ is independent of $t$, hence $\partial_x(\rho_t v_t)=-\partial_x G=-g$ and $\partial_t\rho_t=g$; thus $\partial_t\rho_t+\partial_x(\rho_t v_t)=0$. 
\red{Applying the Benamou–Brenier dynamic formulation gives
\[
W_2^2(q+g,q)
\le
\int_0^1\!\!\int_{\mathbb{R}} |v_t(x)|^2\,\rho_t(x)\,dx\,dt
=
\int_0^1\!\!\int_{\mathbb{R}} G(x)^2\,dx\,dt
=
\int_{\mathbb{R}} G(x)^2\,dx.
\]}
Since $\rho_t(x)\ge c_I/2$ on $I$ and $\rho_t=q$ off $I$, the bound sharpens to
\[
W_2^2(q+g,q)\le 2c_I\int_{\mathbb{R}} G(x)^2\,dx,
\]
and the same estimate holds for $W_2(q-g,q)$.
\end{proof}
[...]

\end{tcolorbox}

\item For the {\bf lower bound}, {\bf the key suggestion was to provide an improved bound on the magnitude of the perturbation $W_{q,r}(f,\tilde f)$, via the dynamic Benamou-Brenier formulation of optimal transport} \citep{brenier2004extended}.

Specifically, GPT made suggestions that we eventually refined into the following lemma:

\begin{lemma}[Local $W_{q,r}$ bound in $\R^p$ via the dynamic formulation]\label{lem:local-Wr-d2}
Let $\mathcal{G}$ be a probability density on $\R^p$, let $I=\prod_{j=1}^p[a_j,b_j]\subset\R^p$ be a bounded box, and let $g\in L^1(\R^p)$ satisfy $\mathrm{supp}(g)\subset I$ and $\int_{\R^p} g=0$. 
Assume $\mathcal{G}(x)\ge c_I>0$ on $I$ and $\|g\|_\infty\le c_I/2$. 
For $r\in[1,\infty)$, 
consider 
a vector field $U\in L^r(\R^p;\R^p)$ 
with $\mathrm{supp}(U)\subset I$ and $\nabla\!\cdot U=g$.
Then 
\[
W_{q,r}^r(\mathcal{G}\pm g,\mathcal{G})\ \le\ 
\frac{2^{ r-1}}{c_I^{ r-1}}\int_{\R^p}\|U(x)\|_q^r dx.
\]
\end{lemma}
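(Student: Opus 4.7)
The plan is to build an explicit admissible interpolation between $\mathcal{G}$ and $\mathcal{G}\pm g$ whose dynamic Benamou-Brenier action is controlled by the $L^r$-norm of the given flux $U$. The hypothesis $\|g\|_\infty\le c_I/2$ is exactly the quantitative input needed to keep the curve uniformly bounded below, and the hypothesis $\nabla\cdot U = g$ with $\mathrm{supp}(U)\subset I$ is exactly what is needed to produce a time-independent momentum field.

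First I set $\rho_t := \mathcal{G} + tg$ for $t\in[0,1]$. Each $\rho_t$ is a probability density: it integrates to $1$ since $\int g=0$; it equals $\mathcal{G}$ outside $I$; and on $I$ the assumptions $\mathcal{G}\ge c_I$ and $\|g\|_\infty\le c_I/2$ yield $\rho_t\ge c_I/2>0$. Then I set $v_t:=-U/\rho_t$ on $I$ and $v_t\equiv 0$ elsewhere, which is consistent since $\mathrm{supp}(U)\subset I$. The key algebraic identity is that $\rho_t v_t\equiv -U$ is independent of $t$, so the continuity equation holds in the distributional sense:
\[
\partial_t\rho_t+\nabla\!\cdot(\rho_t v_t)=g-\nabla\!\cdot U=g-g=0,
\]
and $(\rho_t,v_t)$ is an admissible curve from $\mathcal{G}$ to $\mathcal{G}+g$.

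Next I invoke the dynamic Benamou-Brenier upper bound for $W_{q,r}$: for any admissible pair $(\rho_t,v_t)$ joining two probability densities,
\[
W_{q,r}^r(\mathcal{G}+g,\mathcal{G})\ \le\ \int_0^1\!\!\int_{\R^p}\|v_t(x)\|_q^r\,\rho_t(x)\,dx\,dt.
\]
Plugging in our choice of $v_t$, the integrand equals $\|U(x)\|_q^r\,\rho_t(x)^{1-r}$ and is supported in $I$. Since $r\ge 1$ and $\rho_t\ge c_I/2$ on $I$, we have $\rho_t(x)^{1-r}\le(2/c_I)^{r-1}$ pointwise on $\mathrm{supp}(U)$. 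Integrating over $t\in[0,1]$ then gives exactly the claimed bound. The case $\mathcal{G}-g$ is symmetric: take $\rho_t=\mathcal{G}-tg$ and $v_t=U/\rho_t$, so that $\rho_t v_t\equiv U$ is again time-independent and $\partial_t\rho_t+\nabla\!\cdot(\rho_t v_t)=-g+g=0$.

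The main technical obstacle is invoking the Benamou-Brenier upper bound in the full generality of $q\in[1,\infty]$ and $r\in[1,\infty)$, rather than the classical Euclidean $q=r=2$ case. For smooth and compactly supported $v_t$ this is obtained directly by pushing $\mathcal{G}$ forward along the (well-defined) flow of $v_t$ to produce a curve of couplings with cost equal to the action integral; for $L^r(\rho_t)$ velocities one appeals to the superposition principle to express absolutely continuous curves in Wasserstein space as superpositions of integral curves of the velocity field, following Ambrosio-Gigli-Savar\'e. A minor preparatory check is that our chosen $v_t$ actually lies in $L^r(\rho_t\,dx)$ uniformly in $t$, but this is immediate from the very same pointwise lower bound $\rho_t\ge c_I/2$ that drives the cost estimate.
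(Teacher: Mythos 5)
Your proof is correct and follows essentially the same route as the paper: the affine interpolation $\rho_t=\mathcal{G}+tg$, the time-independent momentum $\rho_t v_t=-U$ with $v_t=-U/\rho_t$, the continuity equation, and the dynamic Benamou--Brenier upper bound with the pointwise estimate $\rho_t^{1-r}\le(2/c_I)^{r-1}$ on $I$. The only difference is bookkeeping for the non-Euclidean $(q,r)$ dynamic formulation, which you justify via the superposition principle of Ambrosio--Gigli--Savar\'e while the paper cites the extended Benamou--Brenier results of \cite{brenier2004extended}.
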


To prove this statement GPT suggested 
using the dynamic Benamou-Brenier formulation.
This shows that the optimal transport cost is a lower bound on the transportation cost along any path that satisfies a continuity equation.

{\bf Suggested argument.}
GPT suggested the following argument to prove that this holds:
For $t\in[0,1]$ define $\rho_t:=\mathcal{G}+t g$ and $v_t(x):=-U(x)/\rho_t(x)$. 
Since $\rho_t(x)\ge c_I/2$ on $I$
and $U$ vanishes outside $I$, this is well defined.
We claim that $(\rho_t,v_t)_{t\in[0,1]}$ satisfies the continuity equation
\(
\partial_t\rho_t+\nabla\!\cdot(\rho_t v_t)=0\)
with endpoints $\rho_0=\mathcal{G}$ and $\rho_1=\mathcal{G}+g$.
Indeed $\partial_t\rho_t=g$, 
while
$\nabla\!\cdot(\rho_t v_t)=\nabla\!\cdot(-U)=-\nabla\!\cdot U=-g$,
hence the sum vanishes.

The results of \cite{brenier2004extended} then imply the lemma. 
Using the lemma in our theorem, and verifying the appropriate regularity conditions (also with help from GPT), lead to the desired lower bound.

{\bf Transcript and extract.}
We provide a link to the first transcript where GPT made such a suggestion.\footnote{\url{https://chatgpt.com/share/69227dd1-a00c-8007-8072-26b6e97a0820}} 
An extract from our interaction is also provided.\footnote{This is also for a special case for our problem (one dimension, $q=r=2$), because it occurred prior to generalizations. 
However, this was where the AI gave the initial idea.}

\item Beyond these concrete suggestions, we have also benefited from using GPT in other ways, which are now perhaps relatively well-established, and thus we will not dwell on them too much.
For instance, as it is now well-established in the literature \citep[see e.g.,][etc]{zheng2023judging},
{\bf we have asked the LLM to be a verifier by checking specific proofs} or writing, and to prepare a list of potential issues. 
These were a concern for our preprint because we went over several revisions and extensions, which could easily introduce inconsistencies. 

This was helpful for catching a number of small notational inconsistencies in our paper (e.g., using both $p$ and $d$ for the dimension).
The helpfulness stems from being able to human-verify the AI claims (clearly true e.g., for the notation issues) much more quickly than it would take to find them. 

\end{enumerate}

{\bf We think that the contributions of the AI were remarkable, and would certainly describe them as  ``creative" if they had been suggested by a human.}
After having worked on this problem for more than two years, and not having solved it, we can certainly appreciate the contribution.
The contribution of the AI was crucial, and we estimate that it has saved us up to several months of work.
In order to recover the same proofs, we would have essentially needed to read the literature, learn about all the tools, and realize how they can be used in our problem.

{\bf Isn't this all standard?}
It is likely that other researchers, with a deeper knowledge of optimal transport, will view these techniques as  ``standard".
In a sense, this note reveals the limitations of the author's knowledge and expertise more than anything.
However, we still believe that the contributions of AI are valuable, and that the setting that we described is somewhat typical of individual researchers' workflow. 

\begin{enumerate}
    \item 
First of all there are few researchers who have a deep knowledge of each of optimal transport, nonparametric density estimation, and robust statistics.
There are even fewer who have showed an active interest in the intersection of the three in terms of published research (we are not aware of any!), 
and who have a ready availability so that we could have asked for help or guidance. See Figure \ref{fig:venn-problem-space} for a cartoon illustration.

\item Second, in the mathematical sciences, it is quite typical to work in small teams, with up to four-five collaborators. 
Thus, necessarily, the expertise of most teams is limited. 
\end{enumerate}

In contrast, {\bf a key advantage of AI as a collaborator is that it has a vast knowledge, including of the three areas above, and that it is immediately available for consultation.}
The LLM has read a vast number of papers and books, and so can make connections between similar problems, results, and techniques, across a wide range. 
To put it another way, key advantages are its large memory, and its ready anytime availability.

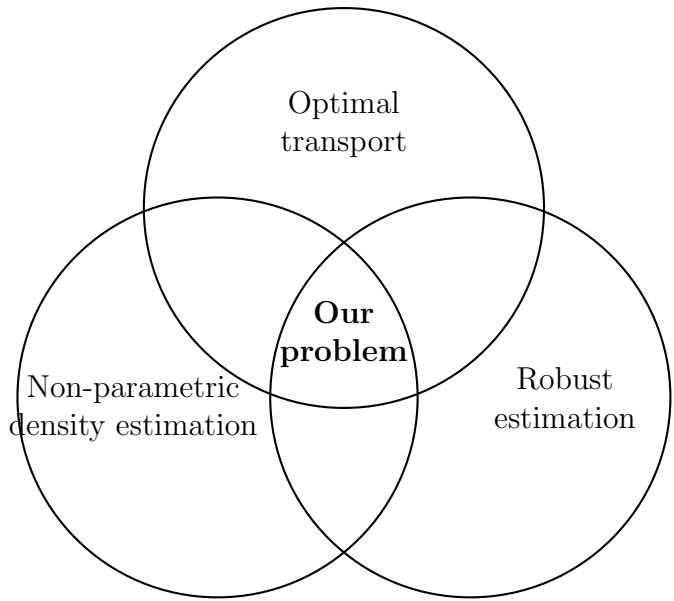
\begin{figure}[ht]
\centering
\begin{tikzpicture}[scale=1.4, every node/.style={align=center}]
  \def\r{1.9}

  \coordinate (A) at (-1.2,0);
  \coordinate (B) at (1.2,0);
  \coordinate (C) at (0,1.8);

  \draw[thick] (A) circle (\r);
  \draw[thick] (B) circle (\r);
  \draw[thick] (C) circle (\r);

  \node at (-2,-0.1) {Non-parametric\\ density estimation};
  \node at (2.1,0) {Robust\\ estimation};
  \node at (0,2.6) {Optimal\\ transport};

  \node[font=\bfseries] at (0,0.6) {Our \\  problem};
\end{tikzpicture}
\caption{Schematic of the problem setting at the intersection of nonparametric density estimation, robust estimation, and optimal transport.}
\label{fig:venn-problem-space}
\end{figure}

\subsection{Limitations of the AI}

While the AI clearly made crucial contributions, it was definitely not perfect. 
It hallucinated references, it glossed over important details, and skipped key steps in the argument.
Below, we outline some of these issues, and how we eventually overcame them.

\begin{enumerate}
    \item For the {\bf upper bound}, there were two major issues.
    \begin{enumerate}
        \item A crucial component is 
        {\bf the existence of the optimal transport map $T$}. Initially, the AI suggested that this follows from certain results in the book by \cite{santambrogio2015optimal}.
    After carefully reading those results, we realized that they were only for the Euclidean case where $q=2$. 
    After re-prompting the AI pointing this out, it suggested Chapter 10 of the book by \cite{villani2008optimal}, which however, was quite hard to parse, due to tackling a far more general setting than we needed.
    
    Eventually, after additional AI-assisted search, we tracked down the reference by \cite{GangboMcCann1996}, a standard in the field. This shows that, {\bf even if a general suggestion is in the right direction, many details can be missing} and may need to be filled in by additional work. Ultimately, our human expertise was necessary to verify the correctness.
\item 
    Second, another crucial component of the argument is 
    {\bf the boundedness of the density $\rho_t$}. For this, the AI suggested another result from \cite{santambrogio2015optimal}, but that again only covered the Euclidean case. 

    Tracking down a correct reference proved tricky. Similar results are well-known in the literature, 
but most of them for slightly different assumptions; 
for instance 
Proposition 7.29 of \cite{santambrogio2015optimal} is for Euclidean (as opposed to $\ell_q$) norms,
 while the results of \cite{ohta2009finsler}
 are stated for squared (as opposed to power-$r$) distances. 

 {\bf The eventual solution.}
 After unsuccessfully querying the AI multiple times to generalize these results, and to find the required results in the literature, we were eventually successful by providing a sketch of the proof that the AI completed.
 Specifically, we realized that, 
 as in the 
 Riemannian Borell-Brascamp-Lieb 
 inequality
 \citep{cordero2001riemannian} and its adaptation to Finsler geometry \citep{ohta2009finsler},
  the determinant of the Jacobian of the transport map can be bounded along the interpolation path.
  In fact, given the properties of the optimal transport map \citep{GangboMcCann1996}, the proof was convenient.

  The proof of this part was highly iterative and interactive, requiring many steps of careful reading of the literature, prompting the AI, and verifying its solutions.
  There were many mis-steps, but they were all eventually resolved. This was the toughest part of the argument, where we thought that we hit a wall several times.

    \end{enumerate}
    
  \item For the {\bf lower bound}, the AI initially suggested to use the dynamic Benamou-Brenier formulation from the book by \cite{santambrogio2015optimal}.
  However, as we have already mentioned, that only covers the Euclidean case. 
  After the AI has not been able to find a correct reference even after multiple attempts, we were eventually led to check out the original paper by \cite{brenier2004extended}. Fortunately, their results were enough to cover our case.

  \item
  There were also a number of claims that we tried to prove that the AI struggled with.
  A priori, it was hard to tell whether or not it could work.
  {\bf Regardless of whether or not the AI found a correct proof of a claim, it would almost always claim that it had succeeded.} 
  This compelled us to verify its proof, which required time and effort. 
    Mistakes were often subtle, requiring careful attentive reading to find.
  
  {\bf When we found and reported a mistake, the AI would typically generate a longer and more complicated purported fix.} The cycle could repeat many times over. 
  Understandably, the AI is trained never to give up; which is often desirable, but sometimes giving up is better than running in circles forever.
  Our overall work required hundreds of queries.
  At some point, we had to use our best judgment to stop, give up, and exit. 
    
\end{enumerate}

Overall, we hope that the above discussion provides a balanced overview of the capabilities and limitations of using current publicly available AI systems for math, and what one may expect when using them.

\section{Commentary and Discussion}
We feel that several important comments must be made or repeated, and discussion points must be raised, in order to have a fully balanced report:
\begin{enumerate}
    \item {\bf Human expertise was indispensable, and the role of education was and remains crucial.} 
    For some background, over the last 15 years, the present author has acquired a good amount of experience with research in the mathematical sciences (reading and writing proofs, literature search), writing papers, and a good working knowledge of basic optimal transport, at the level of \cite{figalli2021invitation}. 
    These skills took  many years to develop, starting from the school level, through doctoral training, and continuing at the level of a faculty member.
    
    Without such skills, we feel that it would be effectively impossible to properly use current AI tools for advanced mathematical research. Therefore, {\bf current AI tools do not diminish in the slightest the importance of education}. Students should still be educated with the same basic mathematical skills.
    Even more, students with more advanced mathematical skills are likely to be able to leverage AI to much greater impact; hence the role of education becomes even more important.

    Instead, the implication on education is that, possibly, if one masters the basics of certain advanced fields or topics (e.g., the basics of existence theorems for optimal transport maps), one may be able to use AI as an aid to work at a more sophisticated level, tackling harder problems faster, than otherwise possible.
    Moreover, it is also possible that some level of training in the use of AI tools could help.

    \item {\bf The feeling of AI-empowerment.}
    Mathematical research requires
    coming up with creative ideas, 
    but can also involve significant amounts of intellectual effort on relatively routine but lengthy calculations (think of integrals requiring pages of computations); which are few mathematicians' favorite work.
Moreover, these calculations are often highly customized, and so the only help/automation that one could hope for is to ask a good collaborator or student to work on them.
    
    Since AI can excel at such calculations, it can greatly reduce  the amount of  required exhausting work. Of course, AI outputs need to be checked, but this is often less challenging than performing the calculations.
    This can be highly empowering. By saving energy from routine calculations, mathematicians can focus more on creative ideas, leading to an increase in both productivity and well-being.

\end{enumerate}

To summarize, in this note we have documented how an instance of human-AI collaboration led to the solution of a research problem in mathematical statistics. 
While current AI systems were not perfect, they already 
provided valuable suggestions and
greatly sped up the research process.

    {\small
        \setlength{\bibsep}{0.2pt plus 0.3ex}
        \bibliographystyle{plainnat-abbrev}
        \bibliography{references}
    }

\end{document}